\def\module0{module$^\dagger$}
\def\ssemiring0{$s$-semiring$^\dagger$}
\newtheorem*{nothma}{\textbf{Theorem A}}
\newtheorem{theorem}{Theorem}[section]
\newtheorem{example}[theorem]{Example}
\newcommand{\one}{\mathbb{1}}
\newcommand{\zero}{\mathbb{0}}
\newcommand{\trop}[1]{\mathcal{#1}}
\newcommand{\tG}{\trop{G}}
\newcommand{\tT}{\trop{T}}
\newcommand{\Hom}{Hom}
    \newenvironment{proof}{
    \smallskip
    \noindent\emph{Proof.}}{\hfill\(\Box\)
    \bigskip
    } \fi
\newcommand{\ifdef}[3]{\ifthenelse{\equal{#1}{true}}{#2}{#3}}
\definecolor{lgray}{gray}{0.90}
\def\({\left(}
\def\){\right)}
\def\pipe{{\underset{{\ \, }}{\mid}}}
\def\vsemifield0{$\nu$-semifield$^\dagger$}
\def\vsemiring0{$\nu$-semiring$^\dagger$}
\def\pipe1{{\underset{{1}}{\mid}}}
\def\lmod1{\mathrel  \pipe1  \joinrel \joinrel =}
\def\CFunFF1{\operatorname{CFun} (F,F)}
\def\semiring0{semiring$^{\dagger}$}
\def\Semiring0{Semiring$^{\dagger}$}
\def\Semirings0{Semirings$^{\dagger}$}
\def\semidomain0{semidomain$^{\dagger}$}
\def\semifield0{semifield$^{\dagger}$}
\def\semifields0{semifields$^{\dagger}$}
\def\vsemifields0{$\nu$-semifields$^{\dagger}$}
\def\domain0{domain$^{\dagger}$}
\def\predomain0{pre-domain$^{\dagger}$}
\def\predomains0{pre-domains$^{\dagger}$}
\def\domains0{domains$^{\dagger}$}
\def\vdomains0{$\nu$-domains$^{\dagger}$}
\def\domains0{domains$^\dagger$}
\def\ker{\operatorname{ker}}
\newcommand{\etype}[1]{\renewcommand{\labelenumi}{(#1{enumi})}}
\def\eroman{\etype{\roman}}
\def\pipe{{\underset{{\tG}}{\mid}}}
\def\lmod{\mathrel  \pipe \joinrel \joinrel =}
\def\pipe{{\underset{{\tG}}{\mid}}}
\newtheorem{thm}[theorem]{Theorem}
\newtheorem*{thm*}{Theorem}
\def\Hom{\operatorname{Hom}}
\newtheorem{lem}[theorem]{Lemma}
\newtheorem{prop*}{Proposition}
\newtheorem{conj*}{Conjecture}
\newtheorem{prop}[theorem]{Proposition}
\newtheorem{defn}[theorem]{Definition}
\newtheorem*{examp*}{Example}
\newtheorem*{examples*}{Examples}
\newtheorem*{remark*}{Remark}
\newtheorem*{defn*}{Definition}
\def\tT{\mathcal T}
\def\tTz{\tT_\zero}
\numberwithin{equation}{section}
\def\M0{M_{\zero}}
\def\PS{P}
\def\Cong{\Phi}
\def\semirings0{semirings$^\dagger$}
\newcommand{\nPS}[1]{\PS_{(!#1)}}
\newcommand{\nPSo}[1]{\nPS{\one}}
\begin{document}


\title[Morita theory of systems]
{Morita theory of systems}

******************************* authors *********************************



\author[L.~Rowen]{Louis Rowen}
\address{Department of Mathematics, Bar-Ilan University, Ramat-Gan 52900,
Israel} \email{rowen@math.biu.ac.il}

\subjclass[2010]{Primary    16D40, 16D90, 16Y60, 08A05; Secondary
13C60, 20N20
  }

\date{\today}


\keywords{Category, generator, system, triple, negation map,
negation morphism,
 surpassing relation,
symmetrization,  supertropical algebra, tropical algebra,
  hypergroup,  Morita theory, projective,
semiring module.}


\begin{abstract}

We present the rudiments of the Morita theory of module systems,
paralleling the classical Morita theory over associative rings.
\end{abstract}

\maketitle





\section{Introduction}

Systems were introduced in \cite{Row16,JuR1} (and applied in
\cite{GaR,JMR,AKR}) to unify
 the algebraic theories of supertropical algebra, hyperfields, and fuzzy rings,
  as surveyed in \cite{Row17}. In \cite{JMR}
 we considered projective modules over commutative ground systems. In this modest note we
 provide a systemic version of
  Morita's theorem, following Bass' classical approach, as
given for example in    ~\cite[\S 4.1]{Row06}, and  formulated over
semirings in \cite[\S~3]{KN}. One interesting facet in using the
``surpassing relation'' $\preceq$ is a new kind of duality which
arises in Theorem~\ref{Mor1} and Proposition~\ref{Morplus}.

%
%

\begin{nothma}[Theorem~\ref{Mor1}, Proposition~\ref{Morplus} -- Morita's theorem for systems] Given  a systemic Morita context
$(\mathcal A, \mathcal A', \mathcal M, \mathcal M', \tau, \tau')$,

\begin{enumerate} \item
\begin{enumerate}\eroman \item If   $\tau'$ is $\preceq$-onto, then  $\mathcal M$ is a $\preceq$-progenerator for $\mathcal
A$-Mod.
\item  If $\tau$ is $\succeq$-onto, then  $\tau' $ is null-monic.
\end{enumerate}

\item The analogous statements hold if we switch left and right, or
$(\mathcal A,\tau,\mathcal M)$ and $(\mathcal A',\tau',\mathcal
M')$.
\end{enumerate}
\end{nothma}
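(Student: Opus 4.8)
The plan is to transcribe Bass's classical derivation of Morita's theorem into the systemic language, replacing each equality by the surpassing relation $\preceq$ and tracking the direction of that relation at every substitution. Being a $\preceq$-progenerator bundles two properties: $\mathcal M$ is finitely generated and $\preceq$-projective, and $\mathcal M$ is a $\preceq$-generator. First I would extract from the hypothesis the single datum that drives everything: since $\tau'$ is $\preceq$-onto, there are finitely many $m'_1,\dots,m'_n\in\mathcal M'$ and $m_1,\dots,m_n\in\mathcal M$ with
\[
\one_{\mathcal A'}\ \preceq\ \sum_{i=1}^n \tau'(m'_i\otimes m_i).
\]
This finite ``$\preceq$-partition of unity'' is the engine of the argument, and the finiteness of the index set is exactly what will deliver finite generation.

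Next I would feed this decomposition through the two associativity relations of the Morita context, which tie $\tau(m\otimes m')\,m''$ to $m\,\tau'(m'\otimes m'')$ and $\tau'(m'\otimes m)\,\tilde m'$ to $m'\,\tau(m\otimes\tilde m')$. Setting $\phi_i(m):=\tau(m\otimes m'_i)$ produces $\mathcal A$-linear maps $\phi_i\colon\mathcal M\to\mathcal A$, and substituting the partition of unity into $m=m\,\one_{\mathcal A'}$ and rewriting through the first relation yields the $\preceq$-dual-basis identity $m\preceq\sum_i \phi_i(m)\,m_i$. By the dual-basis criterion this exhibits $\mathcal M$ as a $\preceq$-split retract of $\mathcal A^{(n)}$, hence finitely generated and $\preceq$-projective. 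For the generator property I would invoke the trace-ideal criterion read in the matching handedness: the relevant pairing has image the trace ideal, and the same partition of unity forces that trace to $\preceq$-surpass $\one$, so $\mathcal M$ is a $\preceq$-generator. The constant discipline here is that composites of $\preceq$-relations only propagate in one direction and there is no cancellation, so I must check at each rewriting that the inequalities chain the correct way.

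For part (1)(ii) I would exploit the genuinely new ingredient, namely the switch from $\preceq$ to $\succeq$. Assuming $\tau$ is $\succeq$-onto, I write $\one_{\mathcal A}\succeq\sum_j \tau(m_j\otimes m'_j)$ and run the second associativity relation to prove that $\tau'$ is null-monic: if $\tau'(x)\in\Null$, then threading $x$ through this $\succeq$-decomposition collapses $x$ into $\Null$ as well. The reversal of the surpassing relation is precisely what makes the pairing behave like an honest injection modulo $\Null$ rather than a mere $\preceq$-comparison, and this asymmetry between the $\preceq$-hypothesis in (i) and the $\succeq$-hypothesis in (ii) is the ``new kind of duality'' flagged in the introduction.

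The step I expect to be the main obstacle is exactly this loss of exactness: without additive inverses the classical implication ``$\tau$ onto $\Rightarrow\tau$ is an isomorphism'' cannot be recovered on the nose, and it must be replaced by the correctly-oriented pair of weaker conclusions ($\preceq$-split on one side, null-monic on the other), with care that the idempotent built from $\sum_i \tau'(m'_i\otimes m_i)$ still behaves like a projector under $\preceq$ despite the absence of subtraction. Part (2) should then require no further work: the defining axioms of a systemic Morita context are symmetric under interchanging $(\mathcal A,\tau,\mathcal M)$ with $(\mathcal A',\tau',\mathcal M')$ and under swapping the left and right actions, so every implication proved above transports verbatim to its mirror image.
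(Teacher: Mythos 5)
For part (1)(i) your route is the paper's route, step for step: the finite decomposition $\one_{\mathcal A'}\preceq\sum_i\tau'(m_i'\otimes m_i)$, the maps $\phi_i=\tau(\,\cdot\,\otimes m_i')$, the dual-basis inequality $m\preceq\sum_i\phi_i(m)\,m_i$ yielding $\preceq$-finite generation and $\preceq$-projectivity, and an appeal to the trace-ideal criterion (Lemma~\ref{tr3}) for the $\preceq$-generator half; part (2) by symmetry is likewise exactly the paper's closing remark. (Note that both you and the paper leave the handedness of the generator step unexamined: the partition of unity lives in $\mathcal A'$, whereas Lemma~\ref{tr3} asks for the trace ideal to $\preceq$-generate $\mathcal A$; you at least flag ``matching handedness,'' but neither argument spells this out.)

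The genuine divergence, and the gap, is in (1)(ii). You propose to deduce that $\tau'$ is null-monic from $\one_{\mathcal A}\succeq\sum_i\tau(m_i\otimes m_i')$ by threading an element of $\ker_N\tau'$ through this decomposition. Try to execute it: for $b=\sum_k z_k'\otimes z_k\in\mathcal M'\otimes_{\mathcal A}\mathcal M$ with $\tau'(b)\succeq\zero$, inserting the decomposition on either side and applying the context axioms of Definition~\ref{tr2} gives
$$b \ \succeq\ \sum_{i,k}[z_k',m_i]\,(m_i'\otimes z_k)\qquad\text{or}\qquad b \ \succeq\ \sum_{i,k}(z_k'\otimes m_i)\,[m_i',z_k],$$
and in both cases the coefficients lie in $\mathcal A'$, depend jointly on $i$ and $k$, and sit outside a tensor taken over $\mathcal A$; they can never be reassembled into $\tau'(b)=\sum_k[z_k',z_k]$, so the hypothesis is unusable and the collapse into $\Null$ you predict does not occur. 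What the paper's computation actually proves is that $\tau$ itself is null-monic: for $b=\sum_k z_k\otimes z_k'\in\mathcal M\otimes_{\mathcal A'}\mathcal M'$ with $\tau(b)\succeq\zero$, the inserted coefficients $[z_k',m_i]\in\mathcal A'$ land \emph{between} the tensor factors, slide across $\otimes_{\mathcal A'}$, and axiom (i) rewrites $z_k[z_k',m_i]=(z_k,z_k')\,m_i$, so that $\tau(b)=\sum_k(z_k,z_k')$ factors out on the left and $b\succeq\sum_i\tau(b)\,m_i\otimes m_i'\succeq\zero$. In other words, the null-monic conclusion attaches to the \emph{same} pairing that is assumed $\succeq$-onto; the ``$\tau'$'' in the printed statement is inconsistent with the paper's own proof (classically, too, ontoness of one pairing gives injectivity of that pairing, not of the other), and your plan, which takes the printed statement at face value, founders at precisely this recombination step.
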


Although the motivation comes from commutative semirings (tropical
algebra and hyperrings), we do not require commutativity.
\section{Basic notions}

A \textbf{semiring}  $(\mathcal A, +, \cdot, 1)$
is an additive commutative semigroup $(\mathcal A, +, \zero)$ and
multiplicative monoid $(\mathcal A, \cdot, \one)$ satisfying $\zero
b = b \zero = \zero$ for all $b \in \mathcal A$, as well as the
usual distributive laws. The semiring predominantly used in tropical
mathematics has been the max-plus algebra, where  $\oplus$
designates $\max$, and $\odot $ designates $+$. However, this
notation is cumbersome in an algebraic development, and also
conflicts with more customary algebraic uses of this notation, so we
proceed with the familiar algebraic notation of addition and
multiplication in whichever semiring is under consideration.

We review some more definitions from \cite{Row16,JMR,JMR2,JuR1} for
the reader's convenience.

\begin{defn}\label{modu2}   A (left) $\tT$-\textbf{\module0} over a set $ \tT$
 is an additive monoid $( \mathcal A,+)$ with a scalar
multiplication $\tT\times \mathcal A \to \mathcal A$ satisfying the
following axiom:

\medskip

 (Distributivity with respect to $\tT$): $a (\sum _{j=1}^u
b_j) = \sum _{j=1}^u (a b_j). $

\medskip

A  $\tT$-\textbf{module} is a  $\tT$-\module0 with a distinguished
element $\zero\in \mathcal A$  satisfying $a\zero_{\mathcal A }=
\zero_{\mathcal A }$ for all $a\in \tT$. To avoid complications, we
assume that $\tT\neq \emptyset$  and work only with $\tT$-modules in
this paper. We define bimodules in the usual way (i.e., satisfying
the classical associativity condition).

\end{defn}
%

Here $\mathcal A$ will be a module over a multiplicative monoid
$\tT$,  with extra structure. When $\mathcal A$ is a semiring, we
essentially have Lorscheid's blueprints, \cite{Lor1,Lor2}.

\subsection{Triples and systems}$ $

\begin{defn}\label{negmap}
 A \textbf{negation map} on  a $\tT$-module $\mathcal A$
is a
 semigroup isomorphism
$(-) :\mathcal A \to \mathcal A$ of order~$\le 2,$  written
$a\mapsto (-)a$, together with a map $(-)$  of order~$\le 2$ on $
\tT$ which also
 respects the $\tT$-action in the sense that
$$((-)a)b = (-)(ab) = a ((-)b)$$ for $a \in \tT,$ $b \in \mathcal A.$ \end{defn}
$(-)\zero = \zero.$ When $(-)\one \in \tT$ it is enough to check
that $(-)b = ((-)\one)b$ for $b \in \mathcal A.$ Assortments of
negation maps are given in \cite{Row16,JuR1,GaR}.
We write $ b_1 (-)b_2 $ for $b_1+ ((-)b_2)$, and $ b ^\circ$ for $ b
(-)a$, called a \textbf{quasi-zero}. Note that  $ (-) a ^\circ=
((-)a)+a =   a ^\circ.$

 The set $\mathcal A
^\circ$ of quasi-zeros is a $\tT$-submodule of $\mathcal A $ that
plays an important role. When $\mathcal A$ is a semiring,  $\mathcal
 A^\circ$ is an ideal.

 \begin{defn}
 A   \textbf{triple} $(\mathcal A, \tT, (-))$ is a $\tT$-module
$\mathcal A$, with $\tT$  a distinguished subset of~$\mathcal A$,
called the set of \textbf{tangible elements}, and a negation map
$(-)$ on $\mathcal A$ satisfying $(-)\tT = \tT,$  in which $\tT \cap
\mathcal A^ \circ = \emptyset$ and $\tT \cup \{\zero\}$ generates $(
\mathcal A,+).$ We write~$\tTz$ for $\tT \cup \{\zero\}.$
\end{defn}

When a given $\tT$-module $\mathcal A$ does not come equipped with a
negation map, there are several ways of providing one: Either take
$(-)$ to be the identity, as is done in supertropical algebra
\cite{IR}, or we ``symmetrize''~$\mathcal A$  as in \cite{JMR}
below. In the supertropical setting, $(-)$ is the identity map. In
the hypergroup applications, $(-)$ is induced from the
hypernegation, as explained in \cite{JMR}.

%
%
\begin{defn}\label{ho}
 By a \textbf{homomorphism} $f: (\mathcal M, \tT_{\mathcal M}, (-),
\preceq)\to (\mathcal M', \tT_{\mathcal M'} , (-)', \preceq')$ of
module triples we mean in the usual universal algebraic
 sense, i.e.,  for $a  \in \tT$ and   $b$, $b_i$  in
$\mathcal M$:
\begin{enumerate}\eroman
\item
$f (\zero ) = \zero  .$
\item $ f((-)b_1)=   (-)
f(b_1);$
\item $ f(b_1 + b_2) \preceq ' f(b_1) + f( b_2) ;$
\item  $ f(a  b)=  a   f( b) $.
\end{enumerate}
\end{defn}

 A \textbf{homomorphism} of semiring triples is also
required to satisfy $ f(b_0 b_1)= f(b_0 ) f( b_1) .$

 We round out the structure  with a \textbf{surpassing relation}
$\preceq$ given in \cite[Definition~1.70]{Row16} and also  described
in  \cite[Definition~3.11]{JuR1}.

\begin{defn}\label{precedeq07}
A \textbf{surpassing relation} on a triple $(\mathcal A, \tT, (-))$,
denoted
  $\preceq $, is a partial pre-order satisfying the following, for elements $a, a_i \in \tT$ and $b_i \in \mathcal A$:

  \begin{enumerate}
 \eroman
    \item  $\zero \preceq c^\circ$
 for any $c\in \mathcal A$.
\item  If $b_1 \preceq b_2$ then $(-)b_1 \preceq (-)b_2$.
  \item If $b_1 \preceq b_2$ and $b_1' \preceq b_2'$ for $i= 1,2$ then  $b_1 + b_1' \preceq b_2
   + b_2'.$
    \item   If  $a \in \tT$ and $b_1 \preceq b_2$ then $a b_1 \preceq ab_2.$
    \item   If  $a_0\preceq a_1 $, then $a_0=  a_1.$
   \end{enumerate}


A \textbf{$\tT$-surpassing relation} on a triple $\mathcal A$ is a
surpassing relation   satisfying the following stronger version of
(v):

If $b  \preceq a $ for $a \in \tT$ and $b \in \mathcal A$, then
$b=a$.
\end{defn}
%
%
%
%
%
%
%
%
%
%
%

\begin{example} [{\cite[Definition~1.70]{Row16},
\cite[Definition~2.17]{JuR1}}]\label{precmain}$ $
\begin{enumerate}\eroman\item  Given a triple $(\mathcal A, \tT ,
(-))$, define $a \preceq_\circ c$ if $a + b^\circ = c$ for some $b
\in \mathcal A.$ Here the surpassing relation $\preceq$ is
$\preceq_\circ$.
\item The symmetrized triple \cite[Example~1.40]{AKR} is a special case of
(i).
\item
Take $\preceq$ to be set inclusion when $\mathcal{A}$ is obtained
from a hyperfield,  see \cite[\S 10]{JuR1}.
\end{enumerate}\end{example}

Our general rule is that $\zero$ in the classical theory is replaced
by $\{ b: b \succeq \zero\},$ which is $\mathcal A^\circ$ when
$\preceq$ is $\preceq_\circ$.

%


 \begin{defn}\label{Tsyst} A \textbf{system}  is a quadruple $(\mathcal A, \tT_{\mathcal A} ,
(-), \preceq),$ where $\preceq$ is a surpassing relation  on the
triple
   $(\mathcal A, \tT_{\mathcal A}  , (-))$,
  which
 is \textbf{uniquely negated} in the sense that
for any $a \in \tT_\mathcal A$,  there is a unique element $b$ of
$\tT_\mathcal A$ for which $\zero \preceq a+b$ (namely $b =
(-)a$)\footnote{This slightly strengthens the version of ``uniquely
negated,'' for triples, used in \cite{Row16}, which says that there
is a unique element $b$ of $\tT$ for which $ a+b \in \mathcal
A^\circ.$}.

A $\tT$-\textbf{system} is a system for which $\preceq$ is a
$\tT$-surpassing relation.

\end{defn}

  There are two ways that we want to view triples and their systems. The first is as the
  ground structure on which we build our representation theory. We call this a \textbf{ground system}.
 A
range of examples of ground systems is given in
\cite[Example~2.16]{JMR}, including ``supertropical
  mathematics.''

  The second way,  initiated in \cite{JuR1},  is to fix a
   ground triple $(\mathcal A, \tT , (-))$, and  take $\mathcal A$-modules $\mathcal M$ together with
$\tT_{\mathcal M}$ satisfying $\tT \tT_{\mathcal M}\subseteq
\tT_{\mathcal M}.$  We also require the triple $(\mathcal M,
\tT_{\mathcal M}, (-))$ over a triple $(\mathcal A, \tT , (-))$ to
satisfy $((-)a)m = (-)(am)$ for $a \in \mathcal A, \ m \in \mathcal
M.$
  We call this a
  \textbf{module system} $\mathcal M : = (\mathcal M,
\tT_{\mathcal M}, (-), \preceq)$ (over $(\mathcal A, \tT , (-))$).

\subsection{$\preceq$-morphisms}\label{morph}$ $

We work over a ground system $\mathcal A= (\mathcal A, \tT_{\mathcal
A}, (-), \preceq)$ and consider $\tT$-module systems $(\mathcal M,
\tT_{\mathcal M}, (-), \preceq),$ to which we often refer merely as
$\mathcal M$.  Recall   \cite[Definition~2.37]{JuR1} that a
\textbf{$\preceq$-morphism} $$f: (\mathcal M, \tT_{\mathcal M}, (-),
\preceq)\to (\mathcal M', \tT_{\mathcal M'} , (-)', \preceq')$$  of
module  systems is a homomorphism $f: \mathcal M \to \mathcal M'$
satisfying
 the conditions of Definition~\ref{ho}, except that (iii) is weakened to $$ f(b_1 + b_2) \preceq ' f(b_1) + f( b_2)
 .$$

\begin{defn}  $ $
 \begin{enumerate}\eroman
        \item
 For any $\preceq$-morphism $f:
(\mathcal M, \tT_{\mathcal M}, (-), \preceq)  \to (\mathcal M',
\tT_{\mathcal M'}, (-), \preceq)$, define the
$\preceq$-\textbf{image}
$$f(\mathcal M )_\preceq :=  \{ b'\in  \mathcal M' : b' \preceq f(b)\quad \text{for some} \quad b\in  \mathcal M \}.$$
%

\item $f:
(\mathcal M, \tT_{\mathcal M}, (-), \preceq)  \to (\mathcal M',
\tT_{\mathcal M'}, (-), \preceq)$ is $\preceq $-\textbf{onto} if $f(
{\mathcal M} )_\preceq \, =\mathcal M',$  i.e., for   every $b \in
{\mathcal M'}$ there exists $a \in  {\mathcal M}$, for which $b
\preceq_f f(a ) $.

\item A $\preceq$-morphism $f$ is \textbf{null-monic}
  when it satisfies the property that
 if $f(b) \succeq \zero$ then $b \succeq
\zero.$
%
 \end{enumerate}
 \end{defn}

\subsection{Negated tensor products}$ $

Also we need the tensor product of   systems over a ground
$\tT$-system. These are described (for semirings) in terms of
congruences, as given for example in~\cite[Definition~3]{Ka2} or, in
our notation, \cite[\S3]{Ka3}. We do it for systems, taking the
negation map into account.

 Let
us work with a right $\mathcal A$-module system $\mathcal M_1$ and
left $\mathcal A$-module system $\mathcal M_2$ over a given ground
$\tT$-system $\mathcal A$. One  defines the tensor product $\mathcal
M_1 \otimes _{\mathcal A} \mathcal M_2$ of $\mathcal M_1$ and
$\mathcal M_2$ in the usual way, to be $(\mathcal F_1 \oplus
\mathcal F_2)/\Cong,$ where $\mathcal F_i$ is the free system
(respectively right or left) with base~$\mathcal M_i$ (and
$\tT_{\mathcal F_i} = \mathcal M_i$), and $\Cong$ is the congruence
generated by all pairs
 \begin{equation}\label{defcong}\left(\left(\sum_j x_{1,j}, \sum_k x_{2,k}\right), \sum_{j,k}\big( x_{1,j},  x_{2,k}\big)\right) ,\quad \bigg((  x_1 a, x_2 ), (x_1,a
 x_2
)\bigg),\qquad \bigg((x_1,x_2),((-)x_1,(-)x_2)\bigg)\end{equation}$
\forall x_{i,j}\in \mathcal M_i, \, a \in \mathcal A$, as well as
the extra axiom
$$((-)x) \otimes y = x \otimes ((-)y).$$
 We   define a
negation map on $\mathcal M_1 \otimes
 _{\mathcal A} \mathcal M_2$ by $(-)(v \otimes w) = ((-)
v) \otimes w.$
%
%

 \begin{defn}\label{tens1} The \textbf{negated tensor product triple} $\mathcal M_1  \otimes _{\mathcal A}\mathcal M_2$ of a right module triple  $(\mathcal M_1, \tT_1,(-))$
 and  a left module triple
$\mathcal M_2$ is $((\mathcal F_1 \oplus \mathcal F_2)/\tT_1 \times
\tT_2,\tT_{\mathcal M_1  \otimes _{\mathcal A}\mathcal M_2},(-))$,
where $\mathcal F_i$ is the free system with base~$\mathcal M_i$,
and $\tT_{\mathcal M_1  \otimes _{\mathcal A}\mathcal M_2}$ is the
set of ``simple tensors'' $a_1 \otimes a_2$ for $a_i \in \tT_i$.
\end{defn}

Any $ \mathcal A$-bilinear map $\Psi: \mathcal M_1  \times \mathcal
M_2\to  \mathcal N$ induces a map $\mathcal M_1  \otimes _{\mathcal
A}\mathcal M_2 \to  \mathcal N$ sending $x_1 \otimes x_2 \mapsto
\Psi(x_1 ,x_2).$

\section{The systemic Morita theory}$ $

As indicated in the introduction, we are interested here in a Morita
theory, with the objective of pushing Bass' methods as far as we
can.

\subsection{The systemic Morita Theorem}\label{proj1}$ $

Bass' approach to Morita's Theorem does not use negation, so can be
formulated over semirings, as done in \cite[\S~3]{KN}. We do it here
for systems and $\preceq$-morphisms, in order to handle more cases,
including the hyperfield case.

\subsubsection{Projective and $\preceq$-projective modules}\label{proj}$ $

  Projective  modules over semirings have been treated in
\cite{Tak}, \cite[Chapter~17]{golan92},   \cite{Mac} and
\cite{IKR6}. We  view them here as  module systems over ground
$\tT$-systems, but often
focus on the modules themselves. 

\begin{defn}\label{precproj} 
 (\cite{DeP,Ka3,KN,Tak}) A module system $\mathcal P: =   (\mathcal P,\tT,(-),\preceq)$ is   \textbf{projective} if for any  onto morphism of  module systems $h: \mathcal M \to \mathcal
  M'$,
 every  morphism $ f: \mathcal P \to \mathcal M'$ \textbf{lifts} to a
 morphism $\tilde f: \mathcal P \to\mathcal M$,
 in the sense that $h\tilde f =  f.$

(\cite{JMR}) $\mathcal P$ is  $\preceq$-\textbf{projective} if for
any  $\preceq$-onto morphism  $h: \mathcal M \to \mathcal M',$
 every $\preceq$-morphism $ f: \mathcal P \to \mathcal M'$ $\preceq$-\textbf{lifts} to a
 $\preceq$-morphism $\tilde f: \mathcal P \to \mathcal M$,
 in the sense that $f \preceq h\tilde f  .$
\end{defn}

The basic properties of $\preceq$-projective module systems are
given in \cite[\S4]{JMR}, including equivalent conditions
\cite[Proposition~4.3]{JMR} and the dual basis lemma
\cite[Proposition~4.16]{JMR}, leading to Schanuel's lemma
\cite[Theorem~4.21]{JMR}, $\preceq$-projective resolutions, and
$\preceq$-projective dimension \cite[Theorem~4.26]{JMR}.

\subsubsection{Generators}$ $

 \cite[Definition~3.8]{KN} defines $\mathcal N$ to be a
 \textbf{generator} if for every $\mathcal A$-module $\mathcal M$
there is an index set $I$ and an onto homomorphism $\mathcal
N^{(I)}\to \mathcal M$.

\begin{defn}\label{tr0} An $\mathcal A$-module triple
$ (\mathcal N, \tT_{\mathcal N},(-))$  is a \textbf{generator}  of a
triple $\mathcal A: = (\mathcal A, \tT,(-),\preceq)$ if for every
$\mathcal A$-module triple $ (\mathcal M, \tT_{\mathcal M},(-),
\preceq)$ there is an onto homomorphism $\mathcal N^{(I)}\to
\mathcal M$ for a suitable  index set $I$.

 An $\mathcal A$-module system $  (\mathcal N, \tT_{\mathcal N},(-), \preceq)$  is a
$\preceq$-\textbf{generator}  of a system $\mathcal A: = (\mathcal
A, \tT,(-), \preceq)$ if for every $\mathcal A$-module system $
(\mathcal M, \tT_{\mathcal M},(-), \preceq)$ there is an index set
$I$ and a $\preceq$-onto $\preceq$-morphism $\mathcal N^{(I)}\to
\mathcal M$.
\end{defn}

Any generator over $\mathcal A$ clearly is a $\preceq$-generator.
For example, $\mathcal A$ itself is a generator and a
$\preceq$-generator.

 Given an $\mathcal A$-module $\mathcal M,$ we define
$\mathcal M^* = \Hom (\mathcal M,\mathcal A),$ the semigroup of
homomorphisms from $\mathcal M$ to~$\mathcal A$, and $\mathcal
M^*_\preceq  $ to be the semigroup of $\preceq$-morphisms from
$\mathcal M$ to $\mathcal A$.

\begin{defn}\label{tr1}
The \textbf{trace ideal} $T(A)$ is $\{\sum_{\operatorname{finite}}
f(a): f \in \mathcal M^*, \ a \in \mathcal A \}.$

The $\preceq  $-\textbf{trace ideal} $T(A)_\preceq  $ is
$\{\sum_{\operatorname{finite}} f(a): f \in \mathcal M^*_\preceq, \
a \in \mathcal A \}.$

  An element  $b \in \mathcal A$ is
 \textbf{$\preceq$-generated} by $\{ a_i : 1 \le i \le t\} \subset \mathcal A$ if $b \preceq \sum _i a_i.$
\end{defn}

By  {\cite[Proposition~3.9]{KN}}, the following conditions are
equivalent:
 \begin{enumerate}\eroman
   \item $\mathcal M$ is a generator of the system $\mathcal A$.
\item $T(\mathcal M)$ generates ${\mathcal A }$.
\item There exists $n$ such that some homomorphic image of $\mathcal M^{(n)}$ generates ${\mathcal A }$. \end{enumerate}

\begin{lem}\label{tr3}
The following conditions are equivalent:
 \begin{enumerate}\eroman
   \item $\mathcal M$ is a $\preceq$-generator.
\item $T(\mathcal M)$ $\preceq$-generates ${\mathcal A }$.
\item There exists $n$ such that some $\preceq$-morphic image of $\mathcal M^{(n)}$ $\preceq$-generates ${\mathcal A }$. \end{enumerate}

\end{lem}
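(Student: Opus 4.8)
Lemma~\ref{tr3} is the $\preceq$-analogue of the equivalence cited from \cite[Proposition~3.9]{KN}, so the plan is to mirror that classical three-way equivalence, replacing each ordinary-onto / generation statement by its surpassing counterpart. I would prove the cycle $\text{(i)}\Rightarrow\text{(ii)}\Rightarrow\text{(iii)}\Rightarrow\text{(i)}$.

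\begin{proof}[Proof sketch]
The plan is as follows. For $\text{(i)}\Rightarrow\text{(ii)}$, suppose $\mathcal M$ is a $\preceq$-generator. Applying the defining property to the module system $\mathcal M = \mathcal A$ itself, there is an index set $I$ and a $\preceq$-onto $\preceq$-morphism $g: \mathcal M^{(I)}\to \mathcal A$. Since $\mathcal A$ is generated by $\tT_{\mathcal A}\cup\{\zero\}$, $\preceq$-ontoness means every $b\in \mathcal A$ satisfies $b\preceq g(x)$ for some $x\in \mathcal M^{(I)}$, and because $x$ is a finite sum of terms from the copies of $\mathcal M$, restricting $g$ to each coordinate yields finitely many $\preceq$-morphisms $f_i\in \mathcal M^*_\preceq$ and elements $a_i\in\mathcal M$ with $b\preceq \sum_i f_i(a_i)$. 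Thus $b$ is $\preceq$-generated by elements of $T(\mathcal M)_\preceq$, which is exactly the assertion that $T(\mathcal M)$ $\preceq$-generates $\mathcal A$.

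For $\text{(ii)}\Rightarrow\text{(iii)}$, from (ii) every $b\in\mathcal A$ has $b\preceq \sum_{i=1}^{t} f_i(a_i)$ with $f_i\in\mathcal M^*_\preceq$; since $\one_{\mathcal A}$ is such a $b$, fix a finite such expression for $\one$ involving, say, $n$ morphisms. Bundling these into a single $\preceq$-morphism $\Phi = (f_1,\dots,f_n): \mathcal M^{(n)}\to\mathcal A$ and composing with the addition map gives a $\preceq$-morphic image of $\mathcal M^{(n)}$ whose image $\preceq$-generates $\one$, hence (by property (iv) of $\preceq$, compatibility with scalar multiplication) $\preceq$-generates all of $\mathcal A$. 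This yields (iii). The step $\text{(iii)}\Rightarrow\text{(i)}$ is the reverse: given that some $\preceq$-morphic image of $\mathcal M^{(n)}$ $\preceq$-generates $\mathcal A$, one obtains a $\preceq$-onto $\preceq$-morphism onto $\mathcal A$, and then for an arbitrary module system $\mathcal M'$ one uses the fact that $\mathcal A$ is itself a generator (so $\mathcal A^{(J)}\to\mathcal M'$ is $\preceq$-onto for suitable $J$) and composes, using that a composite of $\preceq$-onto $\preceq$-morphisms is $\preceq$-onto.

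The main obstacle I anticipate is verifying that $\preceq$-ontoness is preserved under composition and under the passage from a generation statement about $\one_{\mathcal A}$ to one about all of $\mathcal A$. In the classical setting these are immediate from exactness, but here every equality is replaced by a one-sided surpassing $\preceq$, so I must check that the axioms of Definition~\ref{precedeq07}---transitivity of the pre-order, additivity (iii), and scalar-compatibility (iv)---combine correctly when chaining surpassing inequalities through two morphisms. In particular the composite $b\preceq g(x)$ with $x\preceq h(y)$ needs $g$ to respect $\preceq$, which it does as a $\preceq$-morphism together with axiom (iv); the care lies in tracking the surpassing direction so that no step silently assumes the reverse inequality that $\preceq$ does not provide.
\end{proof}
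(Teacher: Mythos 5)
Your proof is correct and takes essentially the same route as the paper's own proof (which adapts \cite[Lemma 4.1.7]{Row06}): (i)$\Rightarrow$(ii) by restricting a $\preceq$-onto $\preceq$-morphism $\mathcal M^{(I)}\to\mathcal A$ to coordinates via the canonical injections to get $\one\preceq\sum_i f_i(a_i)\in T(\mathcal M)$, (ii)$\Rightarrow$(iii) by finiteness of that sum, and (iii)$\Rightarrow$(i) by transitivity of $\preceq$-generation (composition of $\preceq$-onto maps). Your added attention to whether $\preceq$-morphisms preserve the surpassing relation when chaining inequalities is a legitimate subtlety that the paper's one-line treatment of (iii)$\Rightarrow$(i) glosses over, but it does not constitute a different approach.
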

\begin{proof} Analogous; say, following  \cite[Lemma 4.1.7]{Row06}:

$(i) \Rightarrow (ii)$ If $h: \mathcal M^{(I)} \to \mathcal A$ is
$\preceq$-onto then $\one  \preceq h((a_i))$ for some $(a_i)$ in
$\mathcal M^{(I)}$. Taking the canonical injections $\mu_i: \mathcal
M \to \mathcal M^{(I)}$   we define $f_i = h\mu _i : \mathcal M \to
\mathcal A,$ and have $\one \preceq \sum _i f_i(a_i) \in T(\mathcal
M).$

$(ii) \Rightarrow (iii)$ The sum  $\one \preceq \sum _i f_i(a_i)$ is
finite.

$(iii) \Rightarrow (i)$ The property of  $\preceq$-generation is
transitive.
\end{proof}

\begin{defn}
A $\preceq$-\textbf{progenerator} is a  $\preceq$-finitely generated
$\preceq$-projective module which is a $\preceq$-generator.
\end{defn}

\begin{defn}\label{tr2}
A \textbf{(systemic) Morita context} is a six-tuple $(\mathcal A,
\mathcal A', \mathcal M, \mathcal M', \tau, \tau')$ where $\mathcal
A, \mathcal A'$ are systems, $\mathcal M $ is an $\mathcal A -
\mathcal A'$ bimodule,  $\mathcal M' $ is an $\mathcal A' - \mathcal
A$ bimodule, and $$\tau: \mathcal M  \otimes _{\mathcal A'} \mathcal
M' \to \mathcal A, \qquad \tau': \mathcal M'  \otimes _{\mathcal A}
\mathcal M \to \mathcal A'$$ are  homomorphisms, linear on each side
over $ {\mathcal A}$ and $ {\mathcal A'}$ respectively, which
satisfy the following equations, writing $(x,x')$ for $\tau(x,x')$
and $[x',x]$ for $\tau(x',x)$:
 \begin{enumerate}\eroman
   \item $(x,x')y =x[x',y].$
\item $ x'(x,y')=[x',x]y'.$
  \end{enumerate}
 \end{defn}
\begin{lem}
Another way of describing a Morita context is to say that $\left(
\begin{matrix}
        \mathcal A& \mathcal M \\
        \mathcal M' &  \mathcal A'
        \end{matrix}\right)$
               is a semiring, whose tangible elements have tangible components
               and whose negation map and surpassing relation are given componentwise.
\end{lem}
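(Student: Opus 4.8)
The plan is to write down the evident multiplication on the array and to pin down exactly where the context identities (i) and (ii) of Definition~\ref{tr2} are consumed. Represent a typical element as $\left(\begin{matrix} a & x \\ x' & a' \end{matrix}\right)$ with $a\in\mathcal A$, $a'\in\mathcal A'$, $x\in\mathcal M$, $x'\in\mathcal M'$, take addition componentwise (making the array a commutative additive monoid with $\zero$ the all-zero array, since each of the four constituents is such), and define the product by the matrix rule
\[
\left(\begin{matrix} a & x \\ x' & a' \end{matrix}\right)\left(\begin{matrix} b & y \\ y' & b' \end{matrix}\right)=\left(\begin{matrix} ab+(x,y') & ay+xb' \\ x'b+a'y' & [x',y]+a'b' \end{matrix}\right),
\]
where the $\mathcal A$- and $\mathcal A'$-bimodule actions furnish the off-diagonal entries in $\mathcal M,\mathcal M'$ and the pairings $\tau,\tau'$ furnish the diagonal cross-terms $(x,y')\in\mathcal A$ and $[x',y]\in\mathcal A'$. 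The multiplicative unit is the diagonal array with entries $\one_{\mathcal A}$ and $\one_{\mathcal A'}$.

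First I would dispatch distributivity and biadditivity of the product; these reduce entrywise to distributivity in each of $\mathcal A,\mathcal A',\mathcal M,\mathcal M'$ together with the additivity of $\tau,\tau'$ in each argument, so no context identity is needed. The hard part will be associativity, not because it is deep but because it requires carefully matching each triple product of alternating-type factors against the correct identity. Expanding $(PQ)R$ and $P(QR)$, I expect every diagonal entry to reconcile using only the $\mathcal A$-linearity and $\mathcal A'$-balancing of $\tau$ (resp.\ $\tau'$): for instance the $\mathcal A$-valued cross-terms $(x,y')c$, $(ay)z'$, $(xb')z'$ rewrite, via right $\mathcal A$-linearity, left $\mathcal A$-linearity, and the $\mathcal A'$-tensor relation respectively, to $(x,y'c)$, $a(y,z')$, $(x,b'z')$, which are exactly the cross-terms produced by the opposite bracketing. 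The genuine obstruction sits in the two off-diagonal entries. In the $\mathcal M$-entry a term $x\,y'\,z$ (with $x,z\in\mathcal M$, $y'\in\mathcal M'$) is bracketed as $(x,y')z$ on one side and as $x[y',z]$ on the other, so their equality is precisely identity (i); dually, the $\mathcal M'$-entry forces $[x',y]z'=x'(y,z')$, which is identity (ii). Hence the product is associative if and only if the context identities hold, which is the content of the lemma.

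It remains to install the negation, tangibles, and surpassing relation componentwise and to verify compatibility. I would define $(-)$ entrywise; by Definition~\ref{negmap} it is an additive automorphism of order $\le 2$ in each slot, and it respects the product because each bimodule action and each pairing commutes with $(-)$ — the actions by the bimodule axioms and the pairings because $\tau,\tau'$ are homomorphisms of triples and so preserve negation. Consequently the quasi-zeros of the array are exactly the arrays of componentwise quasi-zeros, so $\mathcal A^\circ$ is computed entrywise. Declaring an array tangible when each of its components is tangible or $\zero$ (excluding the zero array), and $\preceq$ to hold entrywise, the five axioms of Definition~\ref{precedeq07} and the unique-negation condition of Definition~\ref{Tsyst} are inherited verbatim from the four constituent systems; in particular $\tT\cap\mathcal A^\circ$ is empty (a tangible component cannot simultaneously be a quasi-zero) and $\tT\cup\{\zero\}$ generates the array additively precisely because the corresponding generation holds in each factor. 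This identifies the Morita context with the stated semiring, equipped with the componentwise negation map and surpassing relation.
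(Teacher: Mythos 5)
Your proof is correct, but it proceeds in the converse direction to the paper's one-sentence argument. The paper starts from the hypothesis that the array is a semiring and simply extracts the context: matrix multiplication restricted to the off-diagonal blocks provides the bilinear pairings $\mathcal M\times\mathcal M'\to\mathcal A$ and $\mathcal M'\times\mathcal M\to\mathcal A'$, which serve as $\tau$ and $\tau'$, with the context identities (i), (ii) of Definition~\ref{tr2} (and the bilinearity/balanced conditions) left implicit in the associativity of the matrix product. You instead build the semiring out of a given context and locate exactly where each axiom is consumed: distributivity needs only biadditivity, the diagonal entries of the associativity check need only bilinearity and the balanced property of the pairings over $\mathcal A'$ (resp.\ $\mathcal A$), and the off-diagonal entries force $(x,y')z=x[y',z]$ and $[x',y]z'=x'(y,z')$, i.e., identities (i) and (ii). Since the lemma asserts an equivalence of descriptions, your construction together with your ``associative if and only if (i) and (ii) hold'' observation covers both implications, and it also treats the systemic data (negation, tangibles, surpassing relation), which the paper's proof omits entirely; what the paper's direction buys is brevity, since every needed identity is simply absorbed into the semiring axioms. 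Two small points deserve attention. First, you rightly adjust the tangible set to arrays whose entries are tangible or $\zero$ (excluding the zero array); under the literal reading ``all entries tangible,'' the axiom that $\tT\cup\{\zero\}$ generates the additive structure would fail (e.g.\ over an idempotent semiring, a single-entry array is never a sum of all-tangible arrays). Second, your claim that unique negation (Definition~\ref{Tsyst}) is inherited ``verbatim'' tacitly assumes that no tangible component can surpass $\zero$: if $\zero\preceq b$ for some tangible entry $b$, then both $(-)b$ and $\zero$ would be admissible entries of a negating array, breaking uniqueness. This is automatic for $\tT$-surpassing relations and for $\preceq_\circ$, but in the generality of arbitrary surpassing relations it should be flagged as a hypothesis.
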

\begin{proof} The semiring matrix multiplication provides
 bilinear maps $\mathcal M
\times  \mathcal M' \to \mathcal A $ and $\mathcal M' \times
\mathcal M  \to \mathcal A' $ which yield $\tau $ and $\tau '$
respectively.
\end{proof}

\begin{thm}\label{Mor1}
 Suppose $(\mathcal A, \mathcal A', \mathcal M, \mathcal
M', \tau, \tau')$ is a systemic Morita context. If   $\tau'$ is
$\preceq$-onto, then  $\mathcal M$ is a $\preceq$-progenerator for
$\mathcal A$-Mod.
\end{thm}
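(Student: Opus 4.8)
The plan is to establish the three defining properties of a $\preceq$-progenerator separately: that $\mathcal M$ is a $\preceq$-generator, that it is $\preceq$-projective, and that it is $\preceq$-finitely generated. The hypothesis that $\tau'$ is $\preceq$-onto is the engine for all three, since $\preceq$-surjectivity of $\tau' : \mathcal M' \otimes_{\mathcal A} \mathcal M \to \mathcal A'$ furnishes, for $\one_{\mathcal A'}$, finitely many elements $x'_i \in \mathcal M'$ and $y_i \in \mathcal M$ with $\one_{\mathcal A'} \preceq \sum_i [x'_i, y_i] = \sum_i \tau'(x'_i, y_i)$. This single ``$\preceq$-dual basis'' relation, combined with the Morita compatibility equations (i) $(x,x')y = x[x',y]$ and (ii) $x'(x,y') = [x',x]y'$ from Definition~\ref{tr2}, is what makes everything run.

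First I would prove $\preceq$-projectivity via the $\preceq$-dual basis lemma (\cite[Proposition~4.16]{JMR}). Each $x'_i \in \mathcal M'$ defines a map $f_i : \mathcal M \to \mathcal A'$ by $y \mapsto [x'_i, y]$; I would check that the $f_i$ lie in $\mathcal M^*_\preceq$ (using that $\tau'$ is a $\preceq$-morphism and linear on the appropriate side) and that for every $y \in \mathcal M$ one has $y \preceq \sum_i y_i f_i(y) = \sum_i y_i [x'_i, y]$. The verification of this last surpassing identity is where equation~(i) enters: writing $\sum_i y_i[x'_i,y]$ and pushing the $\preceq \one_{\mathcal A'}$ relation through the $\mathcal A'$-action on $y$ should recover $y$ up to $\preceq$. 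Because the index set is finite, the dual basis lemma then yields both $\preceq$-projectivity and $\preceq$-finite generation simultaneously, handling two of the three conditions at once.

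Next I would show $\mathcal M$ is a $\preceq$-generator of $\mathcal A$-Mod by invoking the equivalence of Lemma~\ref{tr3}: it suffices to show $T(\mathcal M)_\preceq$ $\preceq$-generates $\mathcal A$, or equivalently (condition (iii)) that a $\preceq$-morphic image of some $\mathcal M^{(n)}$ $\preceq$-generates $\mathcal A$. Here I would instead use the other compatibility equation~(ii) together with $\preceq$-surjectivity of $\tau'$; the symmetry between $\tau$ and $\tau'$ in the context means the relation $\one_{\mathcal A'} \preceq \sum_i [x'_i, y_i]$ can be transported through $\tau$ to produce, for each $a \in \mathcal A$, a $\preceq$-expression $a \preceq \sum_i (x_i, \cdot)$ exhibiting $a$ as $\preceq$-generated by trace-ideal elements. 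I expect this transport step to require Lemma~\ref{tr3}(iii)$\Rightarrow$(i) and careful bookkeeping of which side each module is linear over.

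The main obstacle I anticipate is the bookkeeping of one-sidedness and the direction of the surpassing relation throughout. The surpassing relation $\preceq$ is not an equality, so every place where the classical proof silently uses $\one = \sum_i [x'_i, y_i]$ must be replaced by a $\preceq$-inequality that has to be propagated in the correct direction through the bimodule actions, using exactly the monotonicity axioms (iii) and (iv) of Definition~\ref{precedeq07}. In particular, verifying that the $f_i$ are genuine $\preceq$-morphisms (not merely set maps) and that the dual-basis surpassing identity $y \preceq \sum_i y_i f_i(y)$ holds in the correct orientation is the delicate point; getting $\preceq$ rather than $\succeq$ at each application of equations~(i) and~(ii) is where the argument could break, and is presumably the source of the ``new kind of duality'' flagged in the introduction.
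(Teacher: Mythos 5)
Your skeleton matches the paper's: extract $\one_{\mathcal A'} \preceq \sum_i [x_i',y_i]$ from the $\preceq$-onto-ness of $\tau'$, get the generator property via Lemma~\ref{tr3}, and get $\preceq$-projectivity together with $\preceq$-finite generation from the $\preceq$-dual basis lemma of \cite{JMR}. But your dual basis is built on the wrong side, and that step fails as written. You set $f_i:\mathcal M\to\mathcal A'$, $y\mapsto [x_i',y]$, and claim $y \preceq \sum_i y_i f_i(y)=\sum_i y_i[x_i',y]$. First, such $f_i$ cannot serve as dual-basis functionals for $\mathcal M$ in $\mathcal A$-Mod: they take values in $\mathcal A'$, whereas the dual basis lemma requires elements of $\mathcal M^*_\preceq$, i.e.\ $\preceq$-morphisms $\mathcal M\to\mathcal A$. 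Second, and more seriously, the surpassing identity you assert is not a consequence of the hypothesis: by Morita equation (i), $y_i[x_i',y]=(y_i,x_i')y$, so
$$\sum_i y_i[x_i',y]=\Bigl(\sum_i (y_i,x_i')\Bigr)y,$$
and nothing forces $\sum_i(y_i,x_i')\succeq \one_{\mathcal A}$ --- that would be $\succeq$-onto-ness of $\tau$, not of $\tau'$. (Your identity is exactly the one that exhibits $\mathcal M$ as a finitely generated $\preceq$-projective \emph{right} $\mathcal A'$-module when $\tau$ is onto.) The paper applies equation (i) in the opposite orientation: with $f_j=(\phantom{x},y_j')=\tau(\,\cdot\,,y_j'):\mathcal M\to\mathcal A$, it computes
$$x \;\preceq\; \sum_j x[y_j',y_j] \;=\; \sum_j (x,y_j')y_j \;=\; \sum_j f_j(x)\,y_j,$$
a $\preceq$-dual basis $\{(y_j,f_j)\}$ over $\mathcal A$ with the scalars $f_j(x)\in\mathcal A$ on the left, as required for left modules (the paper does not assume commutativity, so the order is not negotiable).

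On the generator step your plan is also unsound, though here the paper itself is no more detailed (it simply cites Lemma~\ref{tr3}). You propose to ``transport'' $\one_{\mathcal A'}\preceq\sum_i[x_i',y_i]$ through $\tau$ so as to write $\one_{\mathcal A}$ as $\preceq$-generated by trace-ideal elements. No such transport exists: the Morita identities never convert a relation in $\mathcal A'$ into one in $\mathcal A$, and already in the classical ring case surjectivity of $\tau'$ does not make $M$ a generator of $R$-Mod (take $R'=eRe$, $M=Re$, $M'=eR$ with $ReR\neq R$: then $\tau'$ is onto, while the trace ideal of $M$ in $R$-Mod is $ReR$). What $\preceq$-onto-ness of $\tau'$ does buy on the generator side is that $\mathcal M$ $\preceq$-generates Mod-$\mathcal A'$, since the trace ideal of $\mathcal M$ as a right $\mathcal A'$-module contains the image of $\tau'$. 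So you should record this part as a genuine gap rather than ``bookkeeping''; it is also the point at which the paper's own one-line appeal to Lemma~\ref{tr3} deserves scrutiny, since that lemma needs tangible trace elements in $\mathcal A$, which the hypothesis on $\tau'$ alone does not supply.
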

\begin{proof} Write  $[y,\phantom{y}]$ for the homomorphism $y \mapsto [y,y']$. We prove the assertion as in  \cite[page 473]{Row06}. $\mathcal M$~is a  $\preceq$-generator by Lemma~\ref{tr3}. To
prove  projectivity, we assume that  $\sum [y_j, y_j'] \succeq
\one$.
 Write $f_j =  (\phantom{x}, y_j').$ We claim that $\{(y_j,f_j):
1 \le j \le t \}$ comprise a $\preceq$-dual basis. Indeed, $$x
\preceq  \sum x[y_j',y_j] = \sum (x,y_j')y_j = \sum f_j(x) y_j.$$
\end{proof}
%

\begin{prop}\label{Morplus}
 If $\tau$ is $\succeq$-onto, then  $\tau' $ is null-monic.
\end{prop}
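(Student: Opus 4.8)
The plan is to run the $\succeq$-onto hypothesis through the two associativity identities (i) and (ii). Applying $\succeq$-ontoness of $\tau$ to $\one_{\mathcal A}$ yields $x_i \in \mathcal M$, $x_i' \in \mathcal M'$ with $\one_{\mathcal A} \succeq \sum_i (x_i, x_i')$; this ``resolution of the identity'' is the only input of the hypothesis. First I would record that $\tau$ itself is null-monic: given $\sum_j v_j \otimes v_j' \in \mathcal M \otimes_{\mathcal A'} \mathcal M'$ with $\tau(\sum_j v_j \otimes v_j') \succeq \zero$, insert the resolution, convert the emerging $\tau$-value to a scalar by (i), slide it across the $\mathcal A'$-balanced tensor, convert back by (ii), and factor out the surpassing-zero value:
\[
\sum_j v_j \otimes v_j' \;\succeq\; \sum_{i,j} x_i [x_i', v_j] \otimes v_j' \;=\; \sum_{i,j} x_i \otimes x_i'(v_j, v_j') \;=\; \sum_i x_i \otimes x_i'\Big(\sum_j (v_j, v_j')\Big) \;\succeq\; \zero,
\]
using that $\preceq$ is compatible with the bimodule actions and with forming tensors, exactly as is already invoked in Theorem~\ref{Mor1}. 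Together with $\succeq$-ontoness this makes $\tau$ a $\preceq$-isomorphism $\mathcal M \otimes_{\mathcal A'} \mathcal M' \xrightarrow{\sim} \mathcal A$.

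Next I would transport this to the opposite pairing. On the ``sandwich'' $\mathcal M \otimes_{\mathcal A'} \mathcal M' \otimes_{\mathcal A} \mathcal M$ the two maps $\tau \otimes \id_{\mathcal M}$ and $\id_{\mathcal M} \otimes \tau'$ coincide, since both send $x \otimes x' \otimes y$ to $(x,x')y = x[x',y]$ by (i). As $\tau$ is a $\preceq$-isomorphism, $\tau \otimes \id_{\mathcal M}$ identifies the sandwich with $\mathcal A \otimes_{\mathcal A} \mathcal M \cong \mathcal M$, so $\id_{\mathcal M} \otimes \tau'$ is null-monic. Now take $z \in \mathcal M' \otimes_{\mathcal A} \mathcal M$ with $\tau'(z) \succeq \zero$ and feed in $\sum_i x_i \otimes z$: its image $\sum_i x_i\,\tau'(z)$ surpasses $\zero$, whence $\sum_i x_i \otimes z \succeq \zero$ in the sandwich.

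The main obstacle is the final descent, from $\sum_i x_i \otimes z \succeq \zero$ back to $z \succeq \zero$. This is the systemic shadow of faithful flatness: one must produce a $\preceq$-retraction of $z \mapsto \sum_i x_i \otimes z$, which should come from the $\preceq$-generator property of $\mathcal M$ (Lemma~\ref{tr3}) built into $\succeq$-ontoness. The genuinely delicate points are that $\tau$ is only invertible \emph{up to} $\preceq$, so that $\tau \otimes \id$ reflects the relation $\succeq \zero$ only after one checks compatibility of $\preceq$ with $\otimes_{\mathcal A}$, and that this reflection must survive the descent. Pinning these down --- showing that a surpassing-zero relation really does propagate through, and back out of, the tensor construction --- is where I expect the real work to lie, and it is precisely the ``new kind of duality'' between $\succeq$-ontoness and null-monicity flagged in the Introduction.
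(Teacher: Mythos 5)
Your first display is, up to a left--right reflection, the paper's entire proof. The paper extracts $\sum_i(x_i,x_i')\preceq \one$ from the $\succeq$-ontoness of $\tau$, takes $b=\sum_k z_k\otimes z_k'$ with $\tau(b)\succeq\zero$, and computes
$$\sum_k z_k \otimes z_k' \;\succeq\; \sum _k z_k \otimes z_k'\sum_i(x_i,x_i') \;=\; \sum _{i,k}z_k [z_k',x_i] \otimes x_i' \;=\;\sum _{i,k}(z_k ,z_k')x_i \otimes x_i' \;\succeq\; \zero,$$
i.e.\ it inserts the resolution of the identity on the \emph{right} tensor factor and applies (ii) then (i), where you insert it on the \emph{left} factor and apply (i) then (ii); the two computations are identical in substance. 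But note what this argument proves: the element $b$ lies in $\mathcal M\otimes_{\mathcal A'}\mathcal M'$, the domain of $\tau$, and the paper's proof explicitly says ``$b\in\ker_N\tau$.'' So the paper establishes null-monicity of $\tau$, not of $\tau'$: the prime in the statement is evidently a misprint, and the intended assertion is the self-dual one (exactly as in classical Morita theory, where ontoness of a pairing forces its injectivity); the statement for $\tau'$ then follows from the closing symmetry remark \emph{under the hypothesis that $\tau'$ is $\succeq$-onto}, not from a hypothesis on $\tau$.

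Consequently, everything in your proposal after the first display chases a claim the paper does not actually prove, and it is precisely there that your argument has real gaps, as you yourself flag. Concretely: $\succeq$-ontoness plus null-monicity is far weaker than being a ``$\preceq$-isomorphism'' (null-monicity controls only the null kernel $\{b:\tau(b)\succeq\zero\}$ and gives neither injectivity nor any inverse up to $\preceq$), so the claim that $\tau\otimes\id_{\mathcal M}$ identifies the sandwich $\mathcal M\otimes_{\mathcal A'}\mathcal M'\otimes_{\mathcal A}\mathcal M$ with $\mathcal M$ is unjustified; and the final descent from $\sum_i x_i\otimes z\succeq\zero$ to $z\succeq\zero$ has no visible proof --- no $\preceq$-retraction of $z\mapsto\sum_i x_i\otimes z$ is available from the given data. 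In short, your first paragraph \emph{is} the proof, and the right response to the remaining difficulty is not to bridge it but to recognize that the conclusion should read ``$\tau$ is null-monic.''
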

\begin{proof} Write $(\phantom{x}, x')$ for the homomorphism $x \mapsto (x,x')$.
We are given   $\sum  (x_i, x_i') \preceq \one$. We  claim that if
$b = \sum z_k \otimes z_k' \in \ker_N \tau$, then $b \succeq \zero.$
Indeed,
$$\sum z_k \otimes z_k' \succeq  \sum _k z_k \otimes z_k'\sum_i(x_i,x_i')
=  \sum _k  z_k \otimes \sum _i [z_k',x_i]x_i' = \sum _{i,k}z_k
[z_k',x_i] \otimes x_i' =\sum _{i,k}(z_k ,z_k')x_i \otimes x_i'
\succeq \zero.$$\end{proof}

By symmetry of construction, the analogous statements hold if we
switch  $(\tau,\mathcal M,\mathcal A)$ and $(\tau',\mathcal
M',\mathcal A')$.


\begin{thebibliography}{1}





 \bibitem{AKR}  M.~Akian, S. Gaubert, and Rowen, L., Linear algebra over
 systems, Preprint (2018).



%
%
%




\bibitem{DeP} R.P.~Deore and K.B.~Pati,
\newblock On the dual basis of projective semimodules
and its applications,
\newblock {\em Sarajevo Journal of Mathematics}
 1 (14) (2005), 161--169.

%


\bibitem{GaR} L.~Gatto and L.~Rowen, Grassman semialgebras and the Cayley-Hamilton
theorem, ArXiv (2018).


%

\bibitem{golan92}
J.~Golan.
\newblock {\em The theory of semirings with applications in mathematics and
  theoretical computer science}, volume~54.
\newblock Longman Sci \&\ Tech., 1992.




\bibitem{IKR6} Z.~Izhakian, M.~Knebusch, and L.~Rowen,
\newblock {Decompositions of Modules lacking zero sums} (2015), arXiv:1511.04041, to appear in Israel J. Math.



\bibitem{IR}
Z.~Izhakian and L.~Rowen.
\newblock {Supertropical algebra},
\newblock {\em   Advances in Mathematics}, 225(4)(2010), 2222--2286.


\bibitem{Jac1980}
N.~Jacobson.
\newblock {\em Basic algebra II},
\newblock Freeman, 1980.



\bibitem{JMR} J.~Jun, K.~Mincheva, and Rowen, L., Projective module
systems (2018).  arXiv:1809.01996

\bibitem{JMR2} J.~Jun, K.~Mincheva, and Rowen, L., Homology of module
systems (2019).



\bibitem{JuR1}  J.~Jun and Rowen, L., \emph{Categories with negation},
  arXiv 1709.0318.




\bibitem{Ka1} Y.~Katsov,  \emph{Tensor products   of
functors}, Siberian J. Math. 19 (1978), 222-229, trans. from
Sirbiskii Mathematischekii Zhurnal 19 no. 2 (1978), 318–-327.


\bibitem{Ka2} Y.~Katsov,  \emph{Tensor products and injective envelopes of
semimodules over additively regular semirings}, Algebra Colloquium 4
no. 2, (1997),  121–-131.

\bibitem{Ka3} Y.~Katsov,  \emph{Toward homological characterization of semirings:
Serre's conjecture and Bass' perfectness in a semiring context},
Algebra Universalis 52 (2004), 197--214.

 \bibitem{KN}  Y.~Katsov  and T.G.~Nam, Morita equivalence and homological
    characterization of semirings, Journal of Algebra and its
    Applications Vol. 10, No. 3 (2011), 445--473.





\bibitem{La} T.Y.~Lam, Ring Theory,  Lectures on Modules and Rings. Springer (1999).

\bibitem{Lor1} O.~Lorscheid,  \emph{The geometry of blueprints}, arXiv:1103.1745v2  [math.AG]  5 Jan 2012

\bibitem{Lor2} O.~Lorscheid, A blueprinted view on F1-geometry,  preprint at
arXiv:1301.0083v2, 2013.

\bibitem{Mac} A.W.~Macpherson, Projective modules over polyhedral
semirings,
 arXiv:1507.07213v1.


\bibitem{Row06} L.H.~Rowen, \textit{Ring Theory, Vol. I}, Academic Press, Pure and Applied Mathematics~\textbf{127}, 1988.



\bibitem{Row16} L.H.~Rowen, \textit{Algebras with a negation map}, 78 pages,
arXiv:1602.00353 [math.RA].


\bibitem{Row17} L.H.~Rowen, An informal overview of triples and systems, arXiv
1709.03174, NCRA V, Contemporary Mathematics  Volume 727.
%

 \bibitem{Tak} M.~Takahashi, On the bordism categories III, Math. Seminar
 Notes Kobe University 10 (1982), 211--236.


\end{thebibliography}
\end{document}